\newtheorem{theorem}{Theorem}[section]
\newtheorem{lemma}[theorem]{Lemma}
\newtheorem{proposition}[theorem]{Proposition}
\theoremstyle{definition}
\newtheorem{example}[theorem]{Example}
\theoremstyle{remark}
\newtheorem{remark}[theorem]{Remark}
\title{
Hoffman's coclique bound for normal regular digraphs, and nonsymmetric association schemes. 
} 
\date{
\today
}
\author{
 Hadi Kharaghani\thanks{Department of Mathematics and Computer Science, University of Lethbridge,
Lethbridge, Alberta, T1K 3M4, Canada. \texttt{kharaghani@uleth.ca}} 
\and  
 Sho Suda\thanks{Department of Mathematics Education,  Aichi University of Education, 1 Hirosawa, Igaya-cho, Kariya, Aichi 448-8542, Japan. \texttt{suda@auecc.aichi-edu.ac.jp}}
}
\begin{document}

\maketitle
\begin{abstract}

We extend Hoffman's coclique bound for regular digraphs with the property that its adjacency matrix is normal, and discuss cocliques attaining the inequality.
As a consequence, we characterize skew-Bush-type Hadamard matrices in terms of digraphs.  
We present some normal digraphs whose vertex set is decomposed into disjoint cocliques attaining the bound.  
The digraphs provided here are relation graphs of some nonsymmetric association schemes. 
\end{abstract}

\section{Introduction}
Spectral graph theory for undirected graph has been studied very well \cite{BH}. 
Using the eigenvalues of the adjacency matrix of a graph, we obtain several inequalities for parameters of the graph, such as the clique number,  the independence number, the chromatic number, etc. 
Hoffman gave an upper bound for independence number of regular graphs to use the eigenvalues of the adjacency matrix. 

In this paper we extend the Hoffman bound for normal regular digraphs. Here, a normal digraph means a digraph with the adjacency matrix being normal, see Section~\ref{sec:digraph}. 
A coclique attaining the upper bound is also studied. 
In Section~\ref{sec:sbh}, we use the bound to characterize skew-Bush-type (or skew-checkered) Hadamard matrices in terms of doubly regular asymmetric digraphs with some properties. 
This result is an analogy of the result by Wallis \cite{W} that there exists a symmetric Bush-type Hadamard matrix of order $4n^2$ if and only if a strongly regular graph with parameters $(4n^2,2n^2-n,n^2-n,n^2-n)$ such that the vertex set is decomposed into $2n$ disjoint cocliques of size $2n$.  
Note that the cocliques of size $2n$ in the strongly regular graph attain Hoffman's bound. 

A coclique attaining Hoffman's coclique bound in a strongly regular graph $\Gamma$ is a clique attaining the clique bound in the complement of $\Gamma$. 
A spread of a strongly regular graph is a set of disjoint cliques attaining the clique bound.
In \cite{B,C,HT}, a spread of strongly regular graphs is extensively studied.
In Section~\ref{sec:HL}, \ref{sec:td} we provide some normal digraphs with the vertex set decomposed into disjoint cocliques attaining the upper bound. 
All of them are relation graphs of association schemes. 

\section{Preliminaries}
\subsection{Digraphs}\label{sec:digraph}
A {\em digraph} $\Gamma$ is a pair $(X,E)$ such that the {\em vertex set} $X$ is a finite set and the {\em edge set} or {\em arc set} $E$ is a subset of $X\times X$ with $E\cap \{(x,x)\mid x\in X\}=\emptyset$.
The {\em adjacency matrix} of $\Gamma$ is a $(0,1)$-matrix with rows and columns indexed by the elements of $X$ such that $A_{xy}=1$ if $(x,y)\in E$ and $A_{xy}=0$ otherwise. 
A digraph $\Gamma$ is {\em asymmetric} if $(x,y)\in E$ implies $(y,x)\not\in E$, namely $A+A^T$ is a $(0,1)$-matrix, where $A^T$ denotes the transpose of $A$. 
A digraph $\Gamma$ is {\em normal} if the adjacency matrix $A$ is normal, namely $AA^T=A^TA$ holds. 
A digraph $\Gamma$ is {\em $k$-regular} if $|\{y\in X\mid (x,y)\in E\}|=|\{y\in X\mid (y,x)\in E\}|=k$ for any vertex $x$. 

A digraph $\Gamma$ is {\it normally regular with parameters $(n,k,\lambda,\mu)$} if $\Gamma$ is asymmetric, the number of vertices of $\Gamma$ is $n$ and the adjacency matrix $A$ of $\Gamma$ satisfies 
\begin{align}\label{eq:nr}
AA^T=kI_n+\lambda(A+A^T)+\mu(J_n-I_n-A-A^T), 
\end{align}
where $I_n$ is the identity matrix of order $n$ and $J_n$ is the all ones matrix of order $n$. 
It was shown in \cite{JJKS} that a normally regular digraph is indeed normal.
A {\em doubly regular asymmetric digraph} $\Gamma$ with parameters 
$(v,k,\lambda)$ is a normally regular digraph with parameters $(v,k,\lambda,\lambda)$. 

A subset $C$ in $X$ is a {\em coclique} (or an {\em independence set}) in $\Gamma$ if $(x,y)\not\in E$ for any $x,y\in C$.

A digraph $\Gamma$ is {\em strongly connected} if for any distinct vertices $x,y$, there exist vertices $x_0,\ldots,x_s$ such that $x_0=x$, $x_s=y$ and $(x_i,x_{i+1})\in E$ for any $i\in\{0,1,\ldots,s-1\}$.  

The following lemma will be used in Propositin~\ref{prop:sb}. 
\begin{lemma}\label{lem:e}
Let $\Gamma$ be a normally regular digraph with parameters $(n,k,\lambda,\mu)$ with adjacency matrix $A$. 
Assume that $A+A^T=J_n-I_r\otimes J_{n/r}$ for some positive integer $r$ dividing $n$. 
Then the eigenvalues of $A$ are $k$, $\pm\sqrt{-k+\mu}$, or $-n/(2r)\pm\sqrt{k-\mu+(-\lambda+\mu)n/r-n^2/(4r^2)}$.    
\end{lemma}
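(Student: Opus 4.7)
My approach is to exploit normality of $A$ to simultaneously diagonalize $A$, $A^T$, $A+A^T$, and $J_n$. Since $A$ is real and normal, one has $A^T v=\bar\theta v$ whenever $Av=\theta v$, so on any common eigenvector
\[ (A+A^T)v = 2\operatorname{Re}(\theta)\,v \qquad\text{and}\qquad AA^Tv = |\theta|^2 v. \]
Rearranging the normally regular identity \eqref{eq:nr} as
\[ AA^T = (k-\mu)I_n + (\lambda-\mu)(A+A^T) + \mu J_n, \]
I see that $I_n$, $A+A^T$ and $J_n$ commute pairwise and with $A$, so all four matrices admit a joint eigendecomposition. Consequently each eigenvalue $\theta$ of $A$ is recovered, on its joint eigenspace, from the pair $\bigl(\operatorname{Re}(\theta),|\theta|^2\bigr)$.

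The next step is to identify the joint eigenspaces of $S:=A+A^T=J_n-I_r\otimes J_{n/r}$ and $J_n$. Using the Kronecker structure, the ambient space splits orthogonally into three pieces: the line $\operatorname{span}(\mathbf{1}_n)$, on which $(J_n,S)$ acts by $(n,\,n-n/r)$; the $(r-1)$-dimensional space of block-constant vectors orthogonal to $\mathbf{1}_n$ (vectors of the form $w\otimes\mathbf{1}_{n/r}$ with $\mathbf{1}_r^T w=0$), on which $(J_n,S)$ acts by $(0,\,-n/r)$; and the $(n-r)$-dimensional space of vectors whose individual blocks sum to zero, on which $(J_n,S)$ acts trivially.

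Finally, on each piece I read off $\operatorname{Re}(\theta)=\tfrac12\sigma$ and $|\theta|^2=(k-\mu)+(\lambda-\mu)\sigma+\mu\tau$, where $\sigma,\tau$ are the corresponding eigenvalues of $S$ and $J_n$. On $\operatorname{span}(\mathbf{1}_n)$, $k$-regularity forces $\theta=k$. On the per-block zero-sum subspace one gets $\operatorname{Re}(\theta)=0$ and $|\theta|^2=k-\mu$, yielding $\theta=\pm\sqrt{-k+\mu}$ (as a complex square root). On the remaining $(r-1)$-dimensional piece, $\operatorname{Re}(\theta)=-n/(2r)$ and $|\theta|^2=(k-\mu)-(\lambda-\mu)n/r$, so
\[ \operatorname{Im}(\theta)^2 \;=\; |\theta|^2 - \tfrac{n^2}{4r^2} \;=\; k-\mu+(-\lambda+\mu)n/r - n^2/(4r^2), \]
which produces the two remaining eigenvalues in the claimed form.

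The only real obstacle is bookkeeping: correctly decomposing the ambient space into the three joint eigenspaces of $J_n$ and $I_r\otimes J_{n/r}$. Once that Kronecker decomposition is in hand, each eigenvalue $\theta$ is extracted from its real part and squared modulus by an entirely scalar computation, so no further analytic input is needed.
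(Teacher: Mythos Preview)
Your argument is correct and follows essentially the same route as the paper's proof: both use normality to read off $\operatorname{Re}(\theta)$ and $|\theta|^2$ on a common eigenvector, invoke the identity \eqref{eq:nr} to relate them, and then determine the possible real parts from the spectrum of $A+A^T=J_n-I_r\otimes J_{n/r}$. Your version is simply more explicit about the joint eigenspace decomposition (the paper compresses this to the single remark that $\operatorname{Re}(\alpha)\in\{0,-n/(2r)\}$ for $\alpha\neq k$), but the substance is the same.
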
 
\begin{proof}
The valency $k$ is an eigenvalue of $A$ with the all-ones vector as an eigenvector. 
Let $\alpha$ be an eigenvalue whose eigenvector is orthogonal to the all-ones vector. 
By the equation \eqref{eq:nr}, we have 
\begin{align}\label{eq:e1}
\alpha\overline{\alpha}=k+\lambda(\alpha+\overline{\alpha})+\mu(-1-\alpha-\overline{\alpha}). 
\end{align}
Since $A+A^T=J_n-I_r\otimes J_{n/r}$, the real part of $\alpha$ is  $-n/(2r)$ or $0$.
By \eqref{eq:e1}, $\alpha$ is the desired value. 
\end{proof}

\subsection{Association schemes}
A \emph{commutative association scheme of class $d$}
with vertex set $X$ of size $n$ 
is a set of non-zero $(0,1)$-matrices $A_0, \ldots, A_d$, which are called {\em adjacency matrices}, with
rows and columns indexed by $X$, such that:
\begin{enumerate}
\item $A_0=I_n$.
\item $\sum_{i=0}^d A_i = J_n$.
\item For any $i\in\{0,1,\ldots,d\}$, $A_i^T\in\{A_0,A_1,\ldots,A_d\}$.
\item For any $i,j\in\{0,1,\ldots,d\}$, $A_iA_j=\sum_{k=0}^d p_{ij}^k A_k$
for some $p_{ij}^k$'s.
\item For any $i,j\in\{0,1,\ldots,d\}$, $A_iA_j=A_jA_i$. 
\end{enumerate}
The association scheme is said to be \emph{symmetric} if all $A_i$ are symmetric, \emph{nonsymmetric} otherwise.
The \emph{intersection matrix} $B_i$ ($i\in\{0,1,\ldots,d\}$) is $B_i=(p_{ij}^k)_{j,k=0}^{d}$. 

A digraph $\Gamma=(X,E)$ is a \emph{relation graph} of an association scheme with vertex set $X$ if the adjacency matrix of $\Gamma$ is one of the adjacency matrices of the association scheme.  

The vector space spanned by $A_i$'s forms a commutative algebra, denoted by $\mathcal{A}$ and called the \emph{Bose-Mesner algebra} or \emph{adjacency algebra}.
There exists a basis of $\mathcal{A}$ consisting of primitive idempotents, say $E_0=(1/n)J_n,E_1,\ldots,E_d$. 
Since  $\{A_0,A_1,\ldots,A_d\}$ and $\{E_0,E_1,\ldots,E_d\}$ are two bases of $\mathcal{A}$, there exist the change-of-bases matrices $P=(P_{ij})_{i,j=0}^d$, $Q=(Q_{ij})_{i,j=0}^d$ so that
\begin{align*}
A_j=\sum_{i=0}^d P_{ij}E_i,\quad E_j=\frac{1}{n}\sum_{i=0}^d Q_{ij}A_i.
\end{align*}
The matrix $P$ ($Q$ respectively) is said to be the {\em first (second respectively) eigenmatrix}.

\section{Hoffman's bound for normal digraphs}
In this section, we give an upper bound for the size of cocliques in a normal digraph in terms of eigenvalues of the adjacency matrix of the digraph $\Gamma$. 
The upper bound is referred to as the  \emph{Hoffman bound}. 
For a digraph with adjacency matrix $A$,  
define $\theta_{\min}=\min\{\text{Re}(\theta)\mid \theta \text{ is an eigenvalue of }A\}$, $\text{Re}(\theta)$ is the real part of $\theta$. 
Note that for a normal graph,  $\theta_{\min}$ is negative since the trace of $A$ is zero unless $\Gamma$ has no edge.  
\begin{proposition}\label{prop:hb}
Let $n,k$ be positive integers. 
Let $\Gamma=(X,E)$ be a strongly connected $k$-regular normal digraph with $n$ vertices and adjacency matrix $A$. 
For a coclique $C$ in $\Gamma$,  it holds that 
\begin{align}\label{eq:hb}
|C|\leq \frac{n(-\theta_{\min})}{k-\theta_{\min}}.
\end{align} 
Moreover the following hold.
\begin{enumerate}
\item If equality holds in \eqref{eq:hb}, then $|\{y\in C\mid (x,y)\in E\}|+|\{y\in C\mid (y,x)\in E\}|=-2\theta_{\min}$ for any $x\in X\setminus C$.
\item If equality holds in \eqref{eq:hb} and the number of eigenvalues with real part equal to $\theta_{\min}$ is exactly one, then $|\{y\in C\mid (x,y)\in E\}|=-\theta_{\min}$ for any $x\in X\setminus C$.
\end{enumerate}
\end{proposition}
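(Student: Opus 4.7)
The plan is to pass from $A$ to its symmetric part $B=\tfrac12(A+A^T)$ and then run the classical (undirected) Hoffman argument on $B$. The key observation is that normality of $A$ implies $A$ and $A^T$ are simultaneously unitarily diagonalisable, so the eigenvalues of the real symmetric matrix $B$ are precisely the real parts of the eigenvalues of $A$; in particular the least eigenvalue of $B$ equals $\theta_{\min}$, and the $\theta_{\min}$-eigenspace of $B$ is exactly the span of those eigenvectors of $A$ whose eigenvalues have real part $\theta_{\min}$. Moreover $B\mathbf{1}=k\mathbf{1}$ since $A$ is $k$-regular.

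Let $\chi$ be the characteristic vector of the coclique $C$. Because $C$ contains no arc in either direction, $\chi^TA\chi=0$ and hence $\chi^TB\chi=0$. Decompose $\chi=\alpha\mathbf{1}+\chi'$ orthogonally with $\alpha=|C|/n$, so that $\|\chi'\|^2=|C|(1-|C|/n)$. Then
\[
0=\chi^TB\chi=kn\alpha^2+\chi'^TB\chi'\ge kn\alpha^2+\theta_{\min}\|\chi'\|^2,
\]
and solving for $|C|$ gives \eqref{eq:hb}.

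For the equality conclusions, equality in the Rayleigh-type step above forces $\chi'$ to lie in the $\theta_{\min}$-eigenspace of $B$. Using $B\mathbf{1}=k\mathbf{1}$ together with the identity $\alpha(k-\theta_{\min})=-\theta_{\min}$ that follows from equality in \eqref{eq:hb}, a one-line calculation gives $B\chi=\theta_{\min}(\chi-\mathbf{1})$. Reading off the $x$-coordinate for any $x\in X\setminus C$ yields
\[
\tfrac12\bigl(|\{y\in C:(x,y)\in E\}|+|\{y\in C:(y,x)\in E\}|\bigr)=-\theta_{\min},
\]
which is (i). For (ii), non-real eigenvalues of the real matrix $A$ occur in conjugate pairs sharing their real part, so the hypothesis forces the unique eigenvalue with real part $\theta_{\min}$ to itself be real and equal to $\theta_{\min}$. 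Then the $\theta_{\min}$-eigenspaces of $A$ and $B$ coincide, so $\chi'$ is already a $\theta_{\min}$-eigenvector of $A$; repeating the previous computation with $A$ in place of $B$ gives $A\chi=\theta_{\min}(\chi-\mathbf{1})$, whose $x$-coordinate for $x\notin C$ is the one-sided count in (ii).

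The main obstacle, and the only genuinely new ingredient compared with the symmetric case, is the spectral identification $\lambda_{\min}(B)=\theta_{\min}$ with matching eigenspaces; everything else reduces to the standard symmetric Hoffman argument. The hypothesis of strong connectivity enters only implicitly, to guarantee that $k$ is a simple eigenvalue of $A$ and that $\mathbf{1}$ spans the $k$-eigenspace, making the decomposition $\chi=\alpha\mathbf{1}+\chi'$ canonical.
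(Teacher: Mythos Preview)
Your proof is correct and is essentially the paper's argument, repackaged through the symmetric part $B=\tfrac12(A+A^T)$. The paper instead writes out the complex spectral decomposition $A=\sum_i\theta_iE_i$, pairs conjugate eigenvalues so that each coefficient in $\chi^TA\chi$ becomes a real part, and bounds term by term; your observation that normality forces the spectrum of $B$ to be exactly $\{\operatorname{Re}\theta:\theta\in\operatorname{spec}A\}$ with matching eigenspaces is precisely what that pairing accomplishes, so the two computations are line-for-line equivalent. Your framing has the advantage of making the reduction to the classical symmetric Hoffman bound explicit and of isolating the one genuinely new spectral fact; the paper's version has the advantage of keeping the eigenprojections $E_i$ of $A$ visible throughout, which is what it then uses in the equality analyses (i) and (ii) --- but your handling of those via $B\chi=\theta_{\min}(\chi-\mathbf{1})$ and, under the uniqueness hypothesis, $A\chi=\theta_{\min}(\chi-\mathbf{1})$ matches the paper's conclusions exactly.
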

\begin{proof}
Let $\theta_1,\ldots,\theta_{l+2m}$ be the eigenvalues of $A$ such that $\theta_i\in\mathbb{R}$ for any $i\in\{1,\ldots,l\}$ and $\overline{\theta_{l+j}}=\theta_{l+m+j}\not\in\mathbb{R}$ for any $j\in\{1,\ldots,m\}$.
Let $E_i$ be the orthogonal projection onto the eigenspace of $\theta_i$. 
Then $\overline{E_{l+j}}=E_{l+m+j}$ for any $j\in\{1,\ldots,m\}$. 
Since $k$ is an eigenvalue, we set $\theta_1=k$. 
Since $\Gamma$ is strongly connected, $E_1=\frac{1}{n}J_n$. 

Let $\chi$ be the characteristic column vector of $C$. 
Since $C$ is a coclique of $\Gamma$, it holds that 
\begin{align}\label{eq:ind1}
\chi^T A \chi=0.
\end{align} 
On the other hand we estimate the value $\chi^T A \chi$ to use the formula $A=\sum_{i=1}^{l+2m}\theta_iE_i$ as follows:
\begin{align}
\chi^T A \chi&= \chi^T (\sum_{i=1}^{l+2m}\theta_i E_i) \chi= \sum_{i=1}^{l+2m}\theta_i \chi^T E_i \chi \nonumber \\
&=k\chi^T E_1 \chi +\sum_{i=2}^l \theta_i \chi^T E_i \chi +\sum_{i=1}^{2m}\frac{\theta_{l+i}+\overline{\theta_{l+i}} }{2}\chi^T E_{l+i} \chi \nonumber \\
&\geq k\chi^T E_1 \chi+\theta_{\min}\sum_{i=2}^{l+2m} \chi^T E_i \chi \nonumber\\
&=k\chi^T E_1 \chi+\theta_{\min}\chi^T (I_n-E_1) \chi \nonumber \\
&=\frac{(k-\theta_{\min})|C|^2}{n}+\theta_{\min}|C| \label{eq:ind2}
\end{align}
Combining \eqref{eq:ind1} and \eqref{eq:ind2}, we obtain $|C|\leq n(-\theta_{\min})/(k-\theta_{\min})$. 

A coclique $C$ meets the upper bound if and only if $\chi^T E_i \chi=0$  for $i$ such that $i\geq 2$ and $\text{Re}(\theta_i)\neq \theta_{\min}$.

(i): Let $A+A^T=\sum_{i=1}^t \tau_i F_t$ be the spectrum decomposition of $A+A^T$, and set $\tau_1=2k$ and $\tau_t=2\theta_{\min}$. 
Since $F_i\chi=0$ for $i\in\{2,3,\ldots, t-1\}$, 
\begin{align}
(A+A^T)\chi&=2k F_1\chi+\tau_t F_t\chi=2k F_1\chi+\tau_t\sum_{i=2}^t F_i\chi=2k F_1\chi+\tau_t(I_n-F_1)\chi\nonumber\\ 
&=\tau_t \chi+(2k-\tau_t)\frac{1}{n}J_n\chi=\tau_t \chi+(2k-\tau_t)\frac{|C|}{n}\boldsymbol{1}=(-2\theta_{\min})(\boldsymbol{1}-\chi),\label{eq:ind4}
\end{align}
where $\boldsymbol{1}$ is the all-ones vector.
The equation \eqref{eq:ind4} is equivalent to the condition that the $|\{y\in C\mid (x,y)\in E\}|+|\{y\in C\mid (x,y)\in E\}|=-2\theta_{\min}$ for any $x\in X \setminus C$.

(ii):  
Let $\theta_s$ satisfy $\text{Re}(\theta_s)=\theta_{\min}$. Then $\theta_s=\theta_{\min}$. Indeed, if $\theta_s\in\mathbb{C}\setminus\mathbb{R}$, then $\overline{\theta_s}$ also satisfies $\text{Re}(\overline{\theta_s})=\theta_{\min}$. 
This contradicts to the assumption. 
In this case, 
\begin{align}
A\chi&=k E_1\chi+\theta_s E_s\chi=k E_1\chi+\theta_s\sum_{i=2}^sE_i\chi=k E_1\chi+\theta_s(I_n-E_1)\chi\nonumber\\ 
&=\theta_s \chi+(k-\theta_s)\frac{1}{n}J_n\chi=\theta_s \chi+(k-\theta_s)\frac{|C|}{n}\boldsymbol{1}=(-\theta_{\min})(\boldsymbol{1}-\chi).\label{eq:ind3}
\end{align} 
The equation \eqref{eq:ind3} is equivalent to the condition that the size of $\{y\in C\mid (x,y)\in E\}=-\theta_{\min}$ for any $x\in X\setminus C$.
\end{proof}

\begin{remark}\label{rem:1}
Assume that a normally regular digraph $\Gamma$ satisfies the assumptions of Lemma~\ref{lem:e}. 
By $A+A^T=J_n-I_r\otimes J_{n/r}$, the valency $k$ of $\Gamma$ is $\frac{n(r-1)}{2r}$. Thus the right hand side of the bound in  Proposition~\ref{prop:hb} is $n/r$. 
Then the cocliques represented as the main diagonal blocks in $A$ attain the bound in Proposition~\ref{prop:hb}. 
\end{remark}
\section{A characterization of skew-Bush-type Hadamard matrices}\label{sec:sbh}
A {\em Hadamard matrix of order $n$} is an $n\times n$ $(1,-1)$-matrix such that $H H^T=nI_n$. 
A Hadamard matrix $H$ of order $4n^2$ is of {\em Bush-type (or checkered)} if $H=(H_{ij})_{i,j=1}^{2n}$, where $H_{ij}$ is a $2n\times 2n$ matrix for any $i,j\in\{1,\ldots,2n\}$, such that $H_{ii}=J_{2n}$ for any $i\in\{1,\ldots,2n\}$ and  $H_{ij}J_{2n}=J_{2n}H_{ij}=0$ for any distinct $i,j\in\{1,\ldots,2n\}$.
A Bush-type Hadamard matrix $H=(H_{ij})_{i,j=1}^{2n}$ of order $4n^2$ is of {\em skew-Bush-type (or skew-checkered)} if $H-I_{2n}\otimes J_{2n}$ is skew-symmetric.

It was shown by Haemers and Tonchev  in \cite{HT} that some symmetric association scheme of class $3$ exists if and only if  strongly regular graphs with vertex set being decomposed into disjoint cliques attaining Hoffmann's clique bound.
It was shown by Wallis in \cite{W}, that there exists a symmetric Bush-type Hadamard matrix of order $4n^2$ if and only if there exists a strongly regular graph with parameters $(4n^2,2n^2-n,n^2-n,n^2-n)$ such that the vertex set is decomposed into $2n$ disjoint cocliques of size $2n$ (see also \cite[Lemma~1.1]{MX}).

Digraph's counterpart of the result of Haemers and Tonchev by restricting the parameters to $(4n^2,2n^2-n,n^2-n,n^2-n)$ was shown in \cite{GC}, which says there exists some imprimitive nonsymmetric association scheme if and only if there exists a skew-Bush-type Hadamard matrix.  
In this section, we show digraph's counterpart of the result of Wallis \cite{W}, namely characterize the skew-Bush-type Hadamard matrices in terms of the notion of doubly regular asymmetric digraphs with a similar property to the undirected case. 
\begin{proposition}\label{prop:sb}
The following are equivalent.
\begin{enumerate}
\item There exists a skew-Bush-type Hadamard matrix of order $4n^2$.  
\item There exists a doubly regular asymmetric digraph with parameters $(4n^2,2n^2-n,n^2-n)$ such that the vertex set is decomposed into $2n$ disjoint cocliques of size $2n$. 
\end{enumerate}
\end{proposition}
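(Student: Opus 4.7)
The plan is to build the equivalence at the matrix level, pairing the skew-symmetric ``core'' $S = H - I_{2n} \otimes J_{2n}$ of a skew-Bush-type Hadamard matrix with the asymmetric part $A - A^T$ of a digraph adjacency matrix. For $(i) \Rightarrow (ii)$, given $H = I_{2n} \otimes J_{2n} + S$ with $S$ skew-symmetric and $S_{ii} = 0$, set $A = \tfrac{1}{2}(J_{4n^2} - I_{2n}\otimes J_{2n} + S)$. Then $A$ is a $(0,1)$-matrix with zero diagonal blocks (so each block row indexes a coclique of size $2n$), and $A + A^T = J_{4n^2} - I_{2n} \otimes J_{2n}$ by skew-symmetry of $S$. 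A short expansion of $HH^T = 4n^2 I$, invoking the row and column sum conditions $S_{ij} J_{2n} = J_{2n} S_{ij} = 0$ ($i \neq j$) forced by Bush-type, should yield $AA^T = n^2 I + (n^2-n) J$, which rewrites as the normally regular identity with parameters $(4n^2, 2n^2-n, n^2-n, n^2-n)$. Remark~\ref{rem:1} then gives that the block-row cocliques attain Hoffman's bound.

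For $(ii) \Rightarrow (i)$, order the vertices by the coclique partition so that $A$ has a $2n \times 2n$ block form with zero diagonal blocks. From the given parameters one reads off $AA^T = n^2 I + (n^2-n) J$, whence every non-principal eigenvalue of $A$ has modulus $n$; substituting $|C| = 2n$ into \eqref{eq:hb} then forces $\theta_{\min} = -n$. Applying Proposition~\ref{prop:hb}(i) to each of the $2n$ cocliques, together with asymmetry and $|C_i| = 2n$, implies that every vertex outside $C_i$ is adjacent to every vertex of $C_i$ in exactly one direction, so globally $A + A^T = J_{4n^2} - I_{2n} \otimes J_{2n}$. Lemma~\ref{lem:e} with $r = 2n$ now identifies the spectrum of $A$ as $\{2n^2-n,\ \pm ni,\ -n\}$, in which $-n$ is the \emph{unique} eigenvalue with real part $\theta_{\min}$, so Proposition~\ref{prop:hb}(ii) upgrades the conclusion to: each off-diagonal block $A_{ij}$ has constant row sum $n$, and applying the same argument to $A^T$ gives constant column sum $n$. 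Finally define $H = (A - A^T) + I_{2n} \otimes J_{2n}$: by construction $H$ is a $(\pm 1)$-matrix with $H_{ii} = J_{2n}$ and $H - I_{2n} \otimes J_{2n}$ skew-symmetric, the block sum conditions make $H$ Bush-type, and $HH^T = 4n^2 I$ falls out of a direct computation using $(A - A^T)(I_{2n} \otimes J_{2n}) = 0$ (a consequence of the block row and column sums just obtained) together with the normally regular identity for $A$.

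The main obstacle is the passage from Proposition~\ref{prop:hb}(i) to Proposition~\ref{prop:hb}(ii): part (i) only controls the \emph{combined} in-plus-out degree of an external vertex to each coclique, whereas Bush-type requires the separate equality of out-degree and in-degree, both equal to $n$, which is what forces the zero row and column sums of the off-diagonal blocks of $H$. This split is available precisely because $-n$ is the unique eigenvalue of $A$ with real part $\theta_{\min}$, and that uniqueness is not formal from the Hoffman equality alone; it has to be extracted from Lemma~\ref{lem:e}, whose hypothesis $A + A^T = J_n - I_r \otimes J_{n/r}$ is itself first established via Proposition~\ref{prop:hb}(i). The tactical key is therefore to apply the two parts of Proposition~\ref{prop:hb} in sequence, with Lemma~\ref{lem:e} wedged between them to pin down the spectrum.
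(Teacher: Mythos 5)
Your proposal is correct and takes essentially the same route as the paper: the same matrices $A=\tfrac12(J_{4n^2}-H)$ and $H=A-A^T+I_{2n}\otimes J_{2n}$, with Lemma~\ref{lem:e} pinning the spectrum to $\{2n^2-n,\pm n\sqrt{-1},-n\}$ and Proposition~\ref{prop:hb}(ii) (via Remark~\ref{rem:1}) forcing $A_{ij}J_{2n}=J_{2n}A_{ij}=nJ_{2n}$, i.e.\ the Bush-type block conditions. The only divergence is that you establish $A+A^T=J_{4n^2}-I_{2n}\otimes J_{2n}$ through the Hoffman equality and Proposition~\ref{prop:hb}(i), whereas the paper treats it as immediate (a degree count suffices: by asymmetry and regularity each vertex has in-plus-out degree $4n^2-2n$, exactly the number of vertices outside its coclique), a stylistic rather than substantive difference.
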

\begin{proof}
(i)$\Rightarrow$(ii): Let $H$ be a skew-Bush-type Hadamard matrix of order $4n^2$.
Define a $(0,1)$-matrix $A=\frac{1}{2}(J_{4n^2}-H)$. 
Since $H-I_{2n}\otimes J_{2n}$ is skew-symmetric, $A$ satisfies that $A+A^T=J_{4n^2}-I_{2n}\otimes J_{2n}$.  
Thus $A$ is the adjacency matrix of a digraph whose vertex set is decomposed into disjoint $2n$ cliques of size $2n$. 
Since $H$ is a regular Hadamard matrix in particular,  it follows that $A$ satisfies the equation $AA^T=n^2I_{4n^2}+(n^2-n)J_{4n^2}$.    
This shows that $A$ is the adjacency matrix of a doubly regular asymmetric digraph with the desired parameters.

(ii)$\Rightarrow$(i): 
Let $\Gamma$ be a doubly regular asymmetric digraph with parameters $(4n^2,2n^2-n,n^2-n)$ with the property that the vertex set is decomposed into $2n$ disjoint cocliques of size $2n$.  
Let $A$ be the adjacency matrix of $\Gamma$. 
Since  $\Gamma$ is decomposed into $2n$ disjoint cocliques of size $2n$, after a suitable rearranging the ordering of the vertices, we may assume that $A+I_{2n}\otimes J_{2n}$ is a $(0,1)$-matrix. 
Let  $H=A-A^T+I_{2n}\otimes J_{2n}$, and set $H_{ij},A_{ij}$ ($i,j\in\{1,\ldots,2n\}$) to be $2n\times 2n$ matrices such that $H=(H_{ij})_{i,j=1}^{2n}$ and $A=(A_{ij})_{i,j=1}^{2n}$. 
Then $H$ is a $(1,-1)$-matrix, and the direct calculation shows that $H$ is a Hadamard matrix. 
It is clear that each diagonal block of size $2n$ is $J_n$ and $H-I_{2n}\otimes J_{2n}$ is skew-symmetric. 
By Lemma~\ref{lem:e} the eigenvalues of $A$ are $2n^2-n,\pm\sqrt{-1}n,-n$.  
As is shown in Remark~\ref{rem:1}, 
the disjoint $2n$ cocliques represented as the main diagonal blocks of $A$ attain the upper bound in Proposition~\ref{prop:hb}, and thus by Proposition~\ref{prop:hb}(ii) we have $A_{ij}J_{2n}=J_{2n}A_{ij}=nJ_{2n}$ for any distinct $i,j$, namely $H_{ij}J_{2n}=J_{2n}H_{ij}=0$. 
Therefore $H$ is a skew-Bush-type Hadamard matrix. 
\end{proof}

\section{Regular biangular matrices and association schemes}\label{sec:HL}
In \cite{HKS} they constructed association schemes from a Hadamard matrix of order $n$ and mutually orthogonal Latin squares of order $n-1$. 
In this section, we construct some association scheme from a Hadamard matrix of order $n$ and a single Latin with some properties of order $n-1$. 
Some relation graphs of the association schemes have the property that its vertex set is decomposed into disjoint cocliques attaining the bound in Proposition~\ref{prop:hb}.

An \emph{$(\alpha,\beta)$-biangular matrix of order $n$} is an $n\times n$ $(1,-1)$-matrix $H$ such that 
the inner products of its normalized rows of $H$ are in $\{\alpha,\beta\}$ \cite{HKS}.
An $(\alpha,\beta)$-biangular matrix $H$ of order $nm$ is called \emph{regular}
if the rows of $H$ can be partitioned into $m$-classes of size $n$ each in such a way that:
\begin{enumerate}
\item $\vert\langle u,v\rangle\vert = \alpha$ for each distinct pair $u,v$ in the same class,
\item $\vert\langle u,v\rangle\vert = \beta$ for each pair $u,v$ belonging to different classes.
\end{enumerate}
We will use the following lemma proven in \cite{K}. 
\begin{lemma}\label{lemma:HCrows}
If there exists a Hadamard matrix, 
then there exist symmetric $(1,-1)$-matrices
$C_1, C_2, \ldots, C_n$ such that:
\begin{enumerate}
\item $C_1 = J_n$. 
\item $C_iC_j = 0$, $1\le i\ne j\le n$.
\item $C_i^2 = nC_i$, $1\le i\le n$.
\item $\sum_{i=1}^n C_i= nI_n$.
\end{enumerate}
It follows from these conditions that the row sums and column sums are $0$ for $C_i$, $i\ne 1$,
and that $$\sum_{i=2}^nC_i^2 = n^2I_n-nJ_n\;.$$
\end{lemma}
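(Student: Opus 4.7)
The plan is to take a Hadamard matrix $H$ of order $n$, normalize it by column sign-flips (an operation that preserves $HH^T = nI_n$) so that its first column is the all-ones vector, and then define $C_i := h_i h_i^T$, where $h_1, \ldots, h_n$ are the columns of $H$. Each $C_i$ is automatically symmetric, and since each entry of the outer product $h_i h_i^T$ is a product of two $\pm 1$ values, each $C_i$ is a $(1,-1)$-matrix. Because $h_1 = \mathbf{1}$, condition (i) $C_1 = J_n$ is immediate.

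The remaining conditions follow directly from the two Hadamard identities. The identity $H^T H = n I_n$ reads $h_i^T h_j = n \delta_{ij}$, and so
\[
C_i C_j \;=\; h_i (h_i^T h_j) h_j^T \;=\; 0 \quad (i \neq j), \qquad C_i^2 \;=\; n\, h_i h_i^T \;=\; n C_i,
\]
giving (ii) and (iii). The identity $HH^T = n I_n$ rewritten as $\sum_{i=1}^n h_i h_i^T = n I_n$ is exactly (iv). Finally, orthogonality $h_i^T h_1 = 0$ for $i\ge 2$ gives $C_i \mathbf{1} = h_i (h_i^T \mathbf{1}) = 0$, so the rows and columns of $C_i$ sum to zero; summing $C_i^2 = n C_i$ over $i \ge 2$ and applying (iv) then yields $\sum_{i=2}^n C_i^2 = n(nI_n - J_n) = n^2 I_n - n J_n$.

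There is no substantive obstacle in this argument: the matrices $\tfrac{1}{n}C_i$ are precisely the orthogonal projections onto the one-dimensional subspaces spanned by the columns of $H$, so (ii)–(iv) are the standard orthogonality and resolution-of-identity relations for an orthogonal basis of $\mathbb{R}^n$. The only point requiring any care is the initial normalization of $H$ so that its first column is $\mathbf{1}$, which is needed to ensure $C_1$ equals $J_n$ on the nose rather than merely a rank-one $(1,-1)$-matrix.
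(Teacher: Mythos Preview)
Your proof is correct and essentially identical to the paper's, which simply takes a normalized Hadamard matrix $H$ with rows $h_i$ and sets $C_i = h_i^T h_i$; you use columns instead, which is the same construction up to replacing $H$ by $H^T$. One small slip: to make the first \emph{column} equal to $\mathbf{1}$ you need \emph{row} sign-flips rather than column sign-flips, but either operation preserves both $HH^T=nI_n$ and $H^TH=nI_n$, so the argument is unaffected.
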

\begin{proof}
Letting $H$ be a normalized Hadamard matrix with $i$-th row $h_i$ for $i\in\{1,\ldots,n\}$, set $C_i=h_i^Th_i$. 
Then $C_1,\ldots,C_n$ satisfy the conditions (i)-(iv).  
\end{proof}
Let $H=(H_{ij})_{i,j=1}^n$ be a regular $(\alpha,\beta)$-biangular matrix of order $nm$, where each $H_{ij}$ is a square matrix of order $m$ and  the rows in $i$-th block are in the same class for any $i\in\{1,\dots,m\}$. 
The regular $(\alpha,\beta)$-biangular matrix $H$ is said to be of {\it skew-symmetric} if $H_{ij}^T=-H_{ji}$ for any distinct $i,j\in\{1,\ldots,n\}$. 
\begin{theorem}\label{theorem:bm}
Let $n$ be the order of a Hadamard matrix. 
Then the following hold.
\begin{enumerate}
\item There is a symmetric regular $(0,\frac{1}{n-1})$-biangular matrix of order $n(n-1)$.
\item There is a skew-symmetric regular $(0,\frac{1}{n-1})$-biangular matrix of order $n(n-1)$.
\end{enumerate}
\end{theorem}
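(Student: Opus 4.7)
The plan is to build both matrices from a single block template driven by Lemma~\ref{lemma:HCrows} and a symmetric Latin square, then to twist by signs in order to trade symmetry for skew-symmetry without disturbing the biangular calculation. Let $C_1=J_n,C_2,\ldots,C_n$ be the matrices provided by Lemma~\ref{lemma:HCrows}, and let $L=(L_{ab})_{a,b=1}^{n-1}$ be a symmetric Latin square of order $n-1$ on the symbols $\{1,\ldots,n-1\}$; such an $L$ exists for every Hadamard order, since $n\in\{1,2\}$ is trivial and for $n\ge 4$ the value $n-1$ is odd, so that the Cayley table of $\mathbb{Z}_{n-1}$ is symmetric. Consider the $(1,-1)$-matrices of order $n(n-1)$
\[
M=\bigl(C_{1+L_{ab}}\bigr)_{a,b=1}^{n-1}\qquad\text{and}\qquad M'=\bigl(\epsilon(a,b)\,C_{1+L_{ab}}\bigr)_{a,b=1}^{n-1},
\]
where $\epsilon(a,b)=+1$ for $a\le b$ and $\epsilon(a,b)=-1$ for $a>b$.

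For (i), the identity $M^{T}=M$ is immediate from the symmetry of each $C_i$ combined with $L_{ab}=L_{ba}$. The regular biangular structure falls out of expanding
\[
(MM^{T})_{ab}=\sum_{c=1}^{n-1}C_{1+L_{ac}}C_{1+L_{bc}}.
\]
For $a\neq b$ the column property of $L$ forces $L_{ac}\neq L_{bc}$ for every $c$, and every summand vanishes by Lemma~\ref{lemma:HCrows}(ii); for $a=b$ Lemma~\ref{lemma:HCrows}(iii) collapses the sum to $n\sum_{c}C_{1+L_{ac}}$, and the row property of $L$ together with Lemma~\ref{lemma:HCrows}(iv) identifies this with $n(nI_n-J_n)$. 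Dividing by the row-norm squared $n(n-1)$ converts these to normalized inner products equal to $0$ across distinct block rows and to $-1/(n-1)$ within a single block row, which is precisely the regular $(0,1/(n-1))$-biangular property with the $n-1$ block rows as the classes.

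For (ii), on the diagonal blocks of $M'M'^{T}$ the factors $\epsilon(a,b)^{2}=+1$ cancel, and on the off-diagonal blocks they simply multiply already-vanishing summands, so the computation above transfers verbatim and $M'$ is again a regular $(0,1/(n-1))$-biangular matrix with the same class partition. Block-wise skew-symmetry is
\[
(M'_{ab})^{T}=\epsilon(a,b)\,C_{1+L_{ab}}=-\epsilon(b,a)\,C_{1+L_{ba}}=-M'_{ba}\qquad(a\neq b),
\]
using the symmetry of each $C_i$, of $L$, and the off-diagonal antisymmetry of $\epsilon$. The main technical point is therefore not the verification, which is bookkeeping on top of Lemma~\ref{lemma:HCrows}; it is choosing a template in which the symmetric Latin square supplies the orthogonality pattern that kills the cross terms $C_{i}C_{j}$, while the antisymmetric $\epsilon$ is the minimal twist that toggles between parts (i) and (ii).
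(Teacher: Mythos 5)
Your construction is exactly the paper's: substitute the matrices $C_2,\ldots,C_n$ of Lemma~\ref{lemma:HCrows} into a symmetric Latin square (the addition table of $\mathbb{Z}_{n-1}$) and, for part (ii), negate the blocks below the diagonal; your Gram-matrix computation simply spells out the verification the paper leaves terse, and your sign bookkeeping for skew-symmetry matches the paper's definition, which only constrains the off-diagonal blocks. The argument is correct, including the identification of the block rows as the classes (you follow the paper's own labelling of the pair $(0,\tfrac{1}{n-1})$, where the nonzero angle occurs within a class).
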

\begin{proof}
Let $H$ be a normalized Hadamard matrix, and let $L$ be an addition table of $\mathbb{Z}_{n-1}$. 
Then $L$ is a symmetric Latin square with $(i,j)$-entry denoted by $l(i,j)$. 
We regard $L$ as a Latin square on the set $\{2,\ldots,n\}$.
 
Starting with a symmetric Latin square on the set $\{2,\ldots,n\}$ and
substituting $i$ with $C_i$ from Lemma \ref{lemma:HCrows}
for $i\in\{2,\ldots,n\}$, we obtain a matrix which we will denote by $M$.
Clearly $M$ is a $(1,-1)$-matrix of order $n(n-1)$.
It follows from Lemma \ref{lemma:HCrows} that
$MM^T$ is a block matrix with all diagonal blocks equal to
$n^2I_n-nJ_n$ by Lemma~\ref{lemma:HCrows}, 
and off-diagonal blocks equal to zero matrix by Lemma~\ref{lemma:HCrows} (ii) and the property of $L$ being a Latin square.  
This completes the proof of (i).

For a construction (ii), define $M=(M_{ij})_{i,j=1}^n$ by 
$M_{ij}=C_{l(i,j)}$ for $i\leq j$ and $M_{ij}=-C_{l(i,j)}$ for $i>j$. 
Then it is easy to see that the matrix $M$ is the desired  skew-symmetric biangular matrix. 
\end{proof}

More precisely, the matrices $M$ in Theorem~\ref{theorem:bm} (i), (ii) satisfy the following equation:
\begin{align}\label{eq:3}
MM^T=n(n-1)I_{n(n-1)}-nI_{n-1}\otimes (J_n-I_n).
\end{align}
We decompose $M$ into disjoint $(0,1)$-matrices $A_0,A_1,\ldots,A_4$ defined as follows:
\begin{align}
M&=A_0+A_1-A_2+A_3-A_4\nonumber\\
A_0&=I_{n(n-1)}\nonumber\\
A_1+A_2&=(J_{n-1}-I_{n-1})\otimes J_{n},\label{eq:01}\\
A_0+A_3+A_4&=I_{n-1}\otimes J_{n}\label{eq:02}.
\end{align}
Note that $A_1=A_1^T,A_2=A_2^T$ if $M$ is a symmetric regular biangular matrix and $A_1=A_2^T$ if $M$ is a skew-symmetric regular biangular matrix, and $A_3,A_4$ are symmetric in both cases. 
\begin{theorem}\label{thm:as}
\begin{enumerate}
\item The set of matrices $\{A_0,A_1,A_2,A_3,A_4\}$ forms a symmetric association scheme if $M$ is a symmetric regular biangular matrix. 
\item The set of matrices $\{A_0,A_1,A_2,A_3,A_4\}$ forms a nonsymmetric association scheme if $M$ is a skew-symmetric regular biangular matrix.
\end{enumerate}
\end{theorem}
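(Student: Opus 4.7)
My plan is to verify the five defining axioms of a commutative association scheme in parallel for cases (i) and (ii). Axioms (1) and (2), $A_0 = I_{n(n-1)}$ and $\sum_{i=0}^{4} A_i = J_{n(n-1)}$, are immediate from the defining decomposition \eqref{eq:01}--\eqref{eq:02} together with the observation that every entry of $M$ is $\pm 1$ and that the diagonal entries of each $C_{l(u,u)}$ equal $+1$. Axiom (3) follows from the (skew-)symmetry of $M$: in case (i), $M = M^T$ forces $A_1^T = A_1$ and $A_2^T = A_2$; in case (ii), the relation $M_{ji} = -M_{ij}$ for $i \neq j$ gives $A_1^T = A_2$. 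In both cases the diagonal blocks of $M$ are symmetric, so $A_3$ and $A_4$ are symmetric.

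The substantive work is axiom (4), closure under multiplication. I would compute each product $(A_i A_j)_{uv} = \sum_w (A_i)_{uw} (A_j)_{wv}$ blockwise. Every block $(A_k)_{uv}$ admits an explicit expression as an integer combination of $I_n$, $J_n$, and a single matrix $C_{l(u,v)}$ (or $C_{l(u,u)}$ when $u = v$); for instance in case (i), $(A_1)_{uv} = (J_n + C_{l(u,v)})/2$ and $(A_2)_{uv} = (J_n - C_{l(u,v)})/2$ on off-diagonal blocks, with the sign of $C_{l(u,v)}$ flipped for $u > v$ in case (ii), while $(A_3)_{uu} = (J_n + C_{l(u,u)} - 2I_n)/2$ and $(A_4)_{uu} = (J_n - C_{l(u,u)})/2$ on diagonal blocks. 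Expanding the blockwise product and applying Lemma~\ref{lemma:HCrows}---specifically $C_a C_b = n \delta_{ab} C_a$, $J_n C_a = C_a J_n = 0$ for $a \neq 1$, $J_n^2 = n J_n$, and $\sum_{a=1}^n C_a = n I_n$---collapses each inner sum over $w$ into a linear combination of $I_n$, $J_n$, and $C_{l(u,v)}$. The Latin-square structure of $L$ as the addition table of $\mathbb{Z}_{n-1}$, in particular the fact that $l(u,w) = l(w,v)$ forces $u = v$ and that the map $w \mapsto l(u,w)$ is a bijection for each $u$, guarantees that the resulting coefficients depend only on whether $u = v$, not on the specific pair. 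Matching back against the explicit block formulas for the $A_k$'s then yields $A_i A_j = \sum_k p_{ij}^k A_k$ with well-defined non-negative integer intersection numbers.

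For axiom (5) (commutativity), case (i) is immediate: with all $A_i$ symmetric, the identity $A_i A_j = \sum_k p_{ij}^k A_k$ exhibits $A_i A_j$ as a symmetric matrix, hence $A_i A_j = (A_i A_j)^T = A_j A_i$. In case (ii) I would verify $p_{ij}^k = p_{ji}^k$ directly from the blockwise calculation, using the symmetry $l(u,v) = l(v,u)$ together with the symmetry of each $C_a$ to match the two sums under the swap $u \leftrightarrow v$. The principal obstacle is the scale of the bookkeeping: twenty-five ordered products, reducing by transpose symmetry to roughly a dozen genuinely distinct cases, each requiring a blockwise quadratic expansion in the $C_a$-algebra. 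The delicate point in case (ii) is ensuring that the sign flips introduced by the skew-symmetry of the off-diagonal blocks of $M$ consistently cancel across the ranges $w < \min(u,v)$, $\min(u,v) < w < \max(u,v)$, and $w > \max(u,v)$, so that the intersection numbers are indeed symmetric in $i,j$ and the resulting scheme is commutative.
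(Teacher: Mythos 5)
Your plan is correct, and the key structural facts you lean on do hold: on off-diagonal blocks $(A_1)_{uv}=(J_n+\varepsilon_{uv}C_{l(u,v)})/2$, $(A_2)_{uv}=(J_n-\varepsilon_{uv}C_{l(u,v)})/2$ (with $\varepsilon_{uv}=1$ always in case (i), and $\varepsilon_{uv}=\pm1$ according to $u<v$ or $u>v$ in case (ii)), on diagonal blocks $(A_3)_{uu}=(J_n+C_{l(u,u)})/2-I_n$, $(A_4)_{uu}=(J_n-C_{l(u,u)})/2$, and the relations $C_aC_b=n\delta_{ab}C_a$, $J_nC_a=C_aJ_n=0$ ($a\neq 1$), $\sum_a C_a=nI_n$, together with $l(u,w)=l(w,v)\Rightarrow u=v$ and the row-bijectivity of $L$, do collapse every blockwise product into $\alpha I_n+\beta J_n+\gamma\,\varepsilon_{uv}C_{l(u,v)}$ with coefficients independent of $(u,v)$; spot checks reproduce the intersection numbers in Appendices A and B, and your transpose and commutativity arguments are sound. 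However, this is a genuinely different route from the paper. The paper never computes blocks of individual $A_i$: it sets $\mathcal{A}=\mathrm{span}\{A_0,\ldots,A_4\}$, notes that constant row and column sums of the blocks give $A_i(I_{n-1}\otimes J_n),\,A_i((J_{n-1}-I_{n-1})\otimes J_n)\in\mathcal{A}$ as in \eqref{eq:1}--\eqref{eq:2}, and then closes $\mathcal{A}$ under multiplication using only three identities for the signed combinations appearing in $M$: $(A_0+A_3-A_4)^2=n(A_0+A_3-A_4)$ and $(A_1-A_2)(A_0+A_3-A_4)=0$ from Lemma~\ref{lemma:HCrows}(ii),(iii), and $(A_1-A_2)(A_1-A_2)^T\in\mathcal{A}$ from \eqref{eq:3}, combined with the linear relations \eqref{eq:01}--\eqref{eq:02}. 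That argument is much shorter, treats the symmetric and skew-symmetric cases uniformly, and avoids the sign bookkeeping over the ranges of $w$ that you rightly flag as the delicate point; what your approach buys instead is the explicit intersection numbers (the appendix tables fall out of the computation) and a fully explicit verification of commutativity in case (ii), which the paper's closure argument leaves implicit. Be aware that in your write-up closure alone does not give commutativity for free, so the direct check of $p_{ij}^k=p_{ji}^k$ you propose (or an appeal to the computed products) is genuinely needed there.
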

\begin{proof}
In both cases, the proof is the same as follows. 
Let $\mathcal{A}:=\text{span}_{\mathbb{R}}\{A_0,A_1,\ldots,A_4\}$. 
Since each block matrix of $A_i$ for any $i$ has a constant row and column sum, we have 
\begin{align}
A_i (I_{n-1}\otimes J_{n})&=(I_{n-1}\otimes J_{n}) A_i\in\mathcal{A}\label{eq:1},\\
A_i ((J_{n-1}-I_{n-1})\otimes J_{n})&=((J_{n-1}-I_{n-1})\otimes J_{n}) A_i\in\mathcal{A}\label{eq:2}.
\end{align}

First we show $A_iA_j\in\mathcal{A}$ for $i,j\in\{3,4\}$. 
By Lemma~\ref{lemma:HCrows} (iii), we have $(A_0+A_3-A_4)^2=n(A_0+A_3-A_4)$.  
By \eqref{eq:02} and \eqref{eq:1} we have $A_4^2\in\mathcal{A}$. 
Similarly $A_iA_j\in\mathcal{A}$ for others $i,j\in\{3,4\}$.

Next we show $A_iA_j\in\mathcal{A}$ for $i\in\{1,2\},j\in\{3,4\}$ or $i\in\{3,4\},j\in\{1,2\}$.
By Lemma~\ref{lemma:HCrows} (ii), we have $(A_1-A_2)(A_0+A_3-A_4)=0$. 
By \eqref{eq:01}, \eqref{eq:02}, \eqref{eq:1} and \eqref{eq:2}, we have $A_2A_4\in\mathcal{A}$. 
Similarly $A_iA_j\in\mathcal{A}$ for others $i\in\{1,2\},j\in\{3,4\}$ or $i\in\{3,4\},j\in\{1,2\}$.

Finally we show $A_iA_j\in\mathcal{A}$ for $i,j\in\{1,2\}$.
 By \eqref{eq:3} we have $(A_1-A_2)^2\in\mathcal{A}$. 
By \eqref{eq:01} and \eqref{eq:2}, we have $A_iA_j\in\mathcal{A}$ for $i,j\in\{1,2\}$.  
Thus this completes the proof. 
\end{proof}
The first eigenmatrices in Theorem~\ref{thm:as} (i), (ii) are as follows respectively:
\begin{align*}
P&=\begin{pmatrix}
1 & \frac{n(n-2)}{2} & \frac{n(n-2)}{2} & \frac{n-2}{2} & \frac{n}{2} \\
1 & 0 & 0 & \frac{n-2}{2} & -\frac{n}{2} \\
1 & -\frac{n}{2} & -\frac{n}{2} & \frac{n-2}{2} & \frac{n}{2} \\
1 & -\frac{n}{2} & \frac{n}{2} & -1 & 0 \\
1 & \frac{n}{2} & -\frac{n}{2} & -1 & 0 
\end{pmatrix},
P=\begin{pmatrix}
1 & \frac{n(n-2)}{2} & \frac{n(n-2)}{2} & \frac{n-2}{2} & \frac{n}{2} \\
1 & 0 & 0 & \frac{n-2}{2} & -\frac{n}{2} \\
1 & -\frac{n}{2} & -\frac{n}{2} & \frac{n-2}{2} & \frac{n}{2} \\
1 & -\frac{\sqrt{-1}n}{2} & \frac{\sqrt{-1}n}{2} & -1 & 0 \\
1 & \frac{\sqrt{-1}n}{2} & -\frac{\sqrt{-1}n}{2} & -1 & 0 
\end{pmatrix}.
\end{align*}
See Appendices A, B for the intersection numbers and second eigenmatrices. 
Consider relation graphs with adjacency matrix $A_1,A_2$ in both association schemes.   
As Proposition~\ref{prop:hb} shows, each main diagonal block of $A_1,A_2$ represents a coclique and $A_4$ corresponds to a partition of the vertex set by cliques attaining the bound in Proposition~\ref{prop:hb}.

\section{Twin asymmetric designs and association schemes}\label{sec:td}
Finally we focus on normally regular digraphs with $\lambda=\mu$, or equivalently doubly regular asymmetric graphs. 
If an incidence matrix $N$ of a symmetric design is such that 
$N+N^{T}$ is a $(0,1)$-matrix, then $N$ is an adjacency matrix of a 
doubly regular asymmetric digraph, and vice versa. Our main reference for 
this section is \cite{yh-drad}.  
We will refer to a doubly regular asymmetric digraph with parameters 
$(v,k,\lambda)$ as a $DRAD(v,k,\lambda)$.
Symmetric $(v,k,\lambda)$-designs $\mathbf{D}=(X,\mathcal{B})$ and 
$\mathbf{D}^{\prime}=(X,\mathcal{B}^{\prime})$ are called {\em twin designs} 
if there is a bijection $f\colon \mathcal{B}\to\mathcal{B}^{\prime}$ such that 
every block $B\in\mathcal{B}$ is disjoint from $f(B)$.  In general, it is 
not easy to find twin symmetric designs.  However, if $\Gamma$ is a 
$DRAD(v,k,\lambda)$ and $\Gamma^{\prime}$ is the digraph obtained by 
reversing the direction of every arc of $\Gamma$, then the 
corresponding symmetric designs are twins.
The following theorem is proven in \cite{yh-drad}.
\begin{theorem}\label{RHDRAD}
Let $h$ be a positive integer such that there exists a Hadamard matrix of 
order $2h$.  If $p=(2h-1)^{2}$ is a prime power, then, for any 
positive integer $d$, there exists a
\begin{equation}\label{1} 
DRAD\left(\frac{h(p^{2d}-1)}{h+1},hp^{2d},h(h+1)p^{2d-1}\right).
\end{equation}
\end{theorem}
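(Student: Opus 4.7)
The plan is to give an explicit construction of the adjacency matrix $A$ of the claimed DRAD from two ingredients: the Hadamard matrix of order $2h$ and the field $GF(p^{2d})$, where $p=(2h-1)^2$ is a prime power by hypothesis. I would then verify the two defining conditions that $A$ is a $(0,1)$-matrix with $A+A^T$ also a $(0,1)$-matrix and $AA^T=(k-\lambda)I_v+\lambda J_v$.

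First I would apply Lemma~\ref{lemma:HCrows} to a normalized Hadamard matrix of order $2h$ to obtain mutually orthogonal symmetric $(1,-1)$-matrices $C_1=J_{2h},C_2,\ldots,C_{2h}$ satisfying $C_iC_j=0$ for $i\neq j$, $C_i^2=2h\,C_i$, and $\sum_{i=1}^{2h} C_i=2h\,I_{2h}$. These matrices are the ``atoms'' carrying the Hadamard orthogonality that makes cross-terms in $AA^T$ vanish. Next I would introduce a Cayley-type scaffolding on a set of size $v$, using cyclotomic classes in $GF(p^{2d})^\times$ (of index determined by $h+1$) to assign a label $l(x,y)\in\{1,\ldots,2h\}$ to each ordered pair $(x,y)$, and define $A$ by placing the block $\tfrac12(J_{2h}-\varepsilon(x,y)C_{l(x,y)})$ at position $(x,y)$, where $\varepsilon(x,y)\in\{\pm 1\}$ is an antisymmetric sign chosen so that $A+A^T$ is the prescribed $(0,1)$-matrix; the ``square/non-square'' dichotomy in $GF(p^{2d})$ is the natural source of such an antisymmetric pairing.

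The verification of the DRAD equation then reduces to a block expansion of $AA^T$. The diagonal blocks collapse by $\sum_i C_i=2h I$ together with $C_i^2=2hC_i$; the off-diagonal cross-terms $C_iC_j$ vanish for $i\neq j$; what remains is the combinatorial count of how many times each cyclotomic class-label arises as a ``ratio'' $x^{-1}z$ as $z$ ranges over the index set, and this count must come out uniformly equal to $\lambda$. The main obstacle is precisely this combinatorial identity: one needs a partial difference family on $GF(p^{2d})$ with frequencies matching $\lambda=h(h+1)p^{2d-1}$, and the specific arithmetic form $p=(2h-1)^2$ is the input that makes the counts align with the Hadamard parameters. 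An induction on $d$ via the quadratic tower $GF(p^{2(d+1)})\supset GF(p^{2d})$, where the step-up is realized as a tensor-type inflation of the Cayley pattern, would handle the general exponent by bootstrapping from the base case $d=1$.
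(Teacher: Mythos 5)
Your sketch defers the entire difficulty rather than resolving it. You correctly identify ``the main obstacle'' --- the existence of a labelling of ordered pairs by $\{1,\dots,2h\}$ (a partial-difference-family/weighing-matrix type condition) whose frequencies come out uniformly equal to $\lambda=h(h+1)p^{2d-1}$ --- but you then merely assert that the hypothesis $p=(2h-1)^2$ ``makes the counts align'' and that an induction through the tower $GF(p^{2d})\subset GF(p^{2(d+1)})$ handles general $d$. No argument is offered for either claim, and that is precisely the nontrivial content of the theorem: the paper itself does not prove Theorem~\ref{RHDRAD} but quotes it from \cite{yh-drad}, where the construction rests on the existence of \emph{skew} balanced generalized weighing matrices over a cyclic group of order $4h$ (illustrated in Example~\ref{drad160} by the skew $BGW(10,9,8)$ over $C_8$ generated by a negacirculant $g$ with $g^{2h}=-I$), applied to a skew-checkered arrangement of the matrices $C_i$ of Lemma~\ref{lemma:HCrows} in blocks of size $4h^2$. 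Your proposal reproduces only the easy half (orthogonality $C_iC_j=0$ killing cross terms in $AA^T$) and leaves the hard half as an unproved existence statement, so it is not a proof.

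Beyond the gap, one concrete mechanism you propose provably fails: since $p=(2h-1)^2$ is an odd square, $p^{2d}\equiv 1\pmod 4$, so $-1$ is a square in $GF(p^{2d})$ and the quadratic character satisfies $\chi(y-x)=\chi(x-y)$. The square/non-square dichotomy therefore produces a \emph{symmetric} sign $\varepsilon(x,y)$, not an antisymmetric one, and your $A+A^T$ would fail to be a $(0,1)$-matrix; in \cite{yh-drad} the asymmetry comes instead from the relation $w_{ji}=g^{2h}w_{ij}=-w_{ij}$ built into the skew BGW. Finally, you never verify the parameter triple: with blocks of size $2h$ indexed by elements of $GF(p^{2d})$ your vertex count is a multiple of $2h\,p^{2d}$ (or $2h(p^{2d}-1)$), which cannot equal $\frac{h(p^{2d}-1)}{h+1}$, whereas the actual construction has blocks of size $4h^2$ indexed by the rows of the BGW; checking your construction against the paper's own Example~\ref{drad160}, a $DRAD(160,54,18)$ with $h=2$, $d=1$, would have exposed both mismatches immediately.
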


The construction makes use of {\em skew balanced generalized weighing matrices}
and Bush-type Hadamard matrices constructed as in Lemma~\ref{lemma:HCrows} from a Hadamard matrix of 
order $2h$. We illustrate this by an example which relates to the special case of the theorem which used in this note.

\begin{example}\label{drad160}
We start with a BGW$(10,9,8)$ over the cyclic group $C_8$.
Let \[ W=[w_{ij}]=\left(\begin{array}{cccccccccc}
     0&1&1&1&1&1&1&1&1&1\\
 4&0&3&7&5&6&8&1&4&2\\
 4&7&0&3&8&5&6&2&1&4\\
 4&3&7&0&6&8&5&4&2&1\\
 4&1&4&2&0&3&7&5&6&8\\
 4&2&1&4&7&0&3&8&5&6\\
 4&4&2&1&3&7&0&6&8&5\\
 4&5&6&8&1&4&2&0&3&7\\
 4&8&5&6&2&1&4&7&0&3\\
 4&6&8&5&4&2&1&3&7&0
  \end{array}\right).
    \]
Then $W$ is a skew BGW$(10,9,8)$ over the cyclic group $C_8=\langle g\rangle$ generated by the matrix
\[
 g=\left(\begin{array}{cccc}
0&I_4&0&0\\
0&0&I_4&0\\
0&0&0&I_4\\
-I_4&0&0&0
\end{array}\right),
\]
where 
the number $i$ in $G$ denotes $g^i$ for $i=1,2,\ldots,8$. Let
\[
 H=\left(\begin{array}{cccc}
       0&C_2&  C_3& C_4\\
-C_4&0&  C_2& C_3\\
-C_3&-C_4&  0& C_2\\
-C_2&-C_3&  -C_4& 0
         \end{array}\right),
\]
where $C_2,C_3,C_4$ are those constructed in \ref{lemma:HCrows} from
a normalized Hadamard matrix of order 4 and $0$ denotes the zero matrix of order 16.\\

Let 
\[
 R=\left(\begin{array}{cccc}
0&0&0&I_4\\
0&0&I_4&0\\
0&I_4&0&0\\
I_4&0&0&0
\end{array}\right).
\]

Let $G=[Hw_{ij}R]$, then $G$ can be splitted to parts, namely the positive and negative part, to form a twin skew symmetric $(160,54,18)$ design on 160 vertices. 

To do this, keep all the $1$-entries in $G$, 
change all the $-1$-entries to $0$ and let $A_1$ be the $(0,1)$-matrix obtained.
Then,  $A_1$ is the incidence matrix of a symmetric $(160,54,18)$ design. Furthermore,
$A_1+A_1^T$ is a $(0,1)$-matrix. So, $A_1$ is the adjacency matrix of a doubly regular asymmetric digraph.
Now  
change all the $1$-entries in $G$ to $0$, all $-1$-entries to $1$ and let $A_2$ be the $(0,1)$-matrix obtained. 
Then $A_2=A_1^T$, so $A_1$ and $A_2$ are twins. We refer the reader to \cite{yh-drad} for the general construction.
\end{example}

We now use the sequence of doubly regular digraphs obtained from the above theorem 
for $d=1$ to deduce the existence of some association schemes of class five. The general 
case corresponding to any positive integer $d$ will appear elsewhere.

\begin{theorem}
Let $h=2n$ be a positive integer for which there is a Hadamard matrix of order $h$ and $p=2n-1$ is a prime power. 
Consider the skew\\ $BGW(p^2+1,p^2,p^2-1)$ over the cyclic group of order $4n$ and the twin design constructed in \cite{yh-drad} for $d=1$.\\
Let $A_1$ be the plus and $A_2$ the minus twin, $A_4=I_{2n(p^2+1)}\otimes (J_{2n}-I_{2n})$, $A_5=I_{p^2+1}\otimes (J_{4n^2}-I_{2n}\otimes J_{2n})$.\\
Then $\{A_0=I_{4n^2(p^2+1)},A_1,A_2,A_3=J_{4n^2(p^2+1)}-A_1-A_2-A_4-A_5, A_4,A_5\}$\\ forms a nonsymmetric association scheme of class 5 with
the following intersection numbers. Note that $A_1^T=A_2$, $A_3,A_4,A_5$ are symmetric.
\begin{itemize}
 \item 
$A_1A_1 = A_2A_2 =(n-1)(2n-1)(2n^2-n)(A_1+A_2+A_3+A_5)+ n^2(2n-1)^2A_4.$
\item
$A_1A_2 =n^2(2n-1)^2A_0+(2n-1)^2(n^2-n)J.$
\item
$A_1A_3 = A_2A_3=2n(n-1)(2n-1)(A_1+A_2+A_3)+n(2n-1)^2 A_5.$
\item
$A_1A_4 = (n-1)A_1 + nA_2.$
\item
$A_1A_5 = A_2A_5=2n(n-1)(A_1+A_2) +n(2n-1)A_3.$
\item
$A_2A_4  =  nA_1+ (n-1)A_2.$ 
\item
$A_3A_3 =2n(2n-1)^2A_0+4n(n-1)(A_1+A_2+A_3)+2n(2n-1)^2A_4.$
\item
$A_3A_4  = (2n-1)A_3.$
\item
$A_3A_5  =2n(A_1+A_2).$ 
\item
$A_4A_4  =(2n-1)A_0 + (2n-2)A_4.$\item 
$A_4A_5  = (2n-1)A_5.$
\item
$A_5A_5 = 2n(2n-1)A_0+2n(2n-1)A_4+4n(n-1)A_5.$
\end{itemize}
\end{theorem}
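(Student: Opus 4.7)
The plan is to verify the five association-scheme axioms for $\{A_0,A_1,\ldots,A_5\}$ and then confirm each listed intersection number. First I would establish that the six matrices are pairwise disjoint $(0,1)$-matrices summing to $J$ with the stated transpose behaviour. The tensor forms of $A_0$, $A_4$, $A_5$ give $A_0+A_4+A_5=I_{p^2+1}\otimes J_{4n^2}$, covering exactly the diagonal ``big blocks'' of size $4n^2$; the twin construction produces $A_1,A_2$ supported on the off-diagonal big blocks with $A_2=A_1^{T}$; and then $A_3:=J-A_0-A_1-A_2-A_4-A_5$ is automatically a $(0,1)$-matrix on the remaining entries. This settles axioms (i)--(iii), with $A_3,A_4,A_5$ symmetric and $A_1^{T}=A_2$.

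Second, I would compute the intersection numbers in a convenient order. The products among $A_0,A_4,A_5$ follow from their tensor expressions via $(J_m-I_m)^2=(m-1)I_m+(m-2)(J_m-I_m)$ and the identity $(J_{4n^2}-I_{2n}\otimes J_{2n})^2=2n(2n-1)(I_{2n}\otimes J_{2n})+4n(n-1)(J_{4n^2}-I_{2n}\otimes J_{2n})$. The product $A_1A_2=A_1A_1^{T}$ is immediate from the DRAD defining equation (with $\lambda=\mu$), yielding $n^2(2n-1)^2 A_0+n(n-1)(2n-1)^2 J$. The mixed products $A_iA_j$ with $i\in\{1,2\}$ and $j\in\{4,5\}$ rest on the fact that each $2n\times 2n$ sub-block of $A_1$ inside any off-diagonal big block is either zero or has constant row and column sums, a property built in by the Bush-type Hadamard block $H$ (Lemma~\ref{lemma:HCrows}) together with the skew $BGW$ multipliers $w_{ij}$; multiplying by $A_4$ or $A_5$ then simply redistributes those constant sums over the appropriate small/big-clique pattern.

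The main obstacle will be $A_1A_1$ (and symmetrically $A_2A_2=(A_1A_1)^{T}$), which is \emph{not} forced by the DRAD equation alone. I would count walks of length two from $x$ to $z$ in the digraph and show that the count depends only on whether $z$ lies in the same small $2n$-clique as $x$: walks returning to the same small clique contribute $n^2(2n-1)^2$ to the $A_4$-part, while walks to every other vertex contribute uniformly $(n-1)(2n-1)(2n^2-n)$ across $A_1+A_2+A_3+A_5$. This uniformity is the key combinatorial point and comes from the orthogonality of the skew $BGW(p^2+1,p^2,p^2-1)$ combined with the pairwise orthogonality $C_iC_j=0$ ($i\ne j$) of Lemma~\ref{lemma:HCrows}. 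Concretely, the $(x,z)$-entry of $A_1^2$ is a sum over intermediate big-blocks $a$ of contributions of the form $(Hw_{ia}R)(Hw_{aj}R)$, and the BGW identity collapses this sum to an expression involving only the small-clique structure, producing exactly the coefficient $n^2(2n-1)^2$ on $A_4$ and $(n-1)(2n-1)(2n^2-n)$ elsewhere.

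Finally, every remaining product that features $A_3$ reduces to the preceding via $A_3=J-A_0-A_1-A_2-A_4-A_5$ together with $A_iJ=JA_i=k_iJ$. Commutativity then follows because every listed product is invariant under simultaneous transpose ($A_1A_2$ is a real combination of $A_0$ and $J$; $A_1A_1=A_2A_2$; all products with the symmetric $A_3,A_4,A_5$ are symmetric combinations in the scheme), so $A_iA_j=A_jA_i$ throughout, giving axioms (iv)--(v) and completing the verification that $\{A_0,\ldots,A_5\}$ is a nonsymmetric commutative association scheme of class $5$ with the stated intersection numbers.
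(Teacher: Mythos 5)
Most of your outline runs parallel to the paper's verification and those parts are sound: $A_1A_2=A_1A_1^{T}=n^2(2n-1)^2A_0+n(n-1)(2n-1)^2J$ comes straight from the symmetric-design (DRAD) equation since $A_2=A_1^{T}$; the products $A_1A_4$, $A_1A_5$ (and their mates) follow because every nonzero $2n\times 2n$ sub-block of $A_1$ is the positive part of some $\pm C_k$ from Lemma~\ref{lemma:HCrows} and hence has constant row and column sums $n$, while each off-diagonal big block of $A_1$ has constant row sums $n(2n-1)$; the products among $A_0,A_4,A_5$ are the tensor identities you quote; the products involving $A_3$ can be recovered by complementation (the paper instead uses the explicit form $A_3=[|w_{ij}|R]$ together with $A_1+A_2+A_3=J-I_{p^2+1}\otimes J_{4n^2}$, but the two reductions are interchangeable once the rest of the table is known); and commutativity does follow from the completed table via transposition, exactly as you say.

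The genuine gap is at the step you yourself identify as the crux, $A_1A_1$. Your mechanism is mis-stated: the big blocks $\sum_a (Hw_{ia}R)(Hw_{aj}R)$ are the blocks of $(A_1-A_2)^2$, not of $A_1^2$; writing $A_1=\tfrac12\left((A_1+A_2)+(A_1-A_2)\right)$ you must also control the cross terms with the unsigned support $A_1+A_2$, and these are not governed by BGW balancedness plus $C_iC_j=0$. Moreover the BGW condition constrains $\sum_a w_{ia}w_{ja}^{-1}$, i.e.\ it yields $A_1A_1^{T}$ — which you already have from the design equation — whereas collapsing $\sum_a w_{ia}(\cdot)w_{aj}$ requires in addition the skewness of $W$, the conjugating matrix $R=R_{2n}\otimes I_{2n}$, and the negacirculant representation in which the group elements sum to zero; none of this appears in your sketch, so the uniform coefficient $(n-1)(2n-1)(2n^2-n)$ on $A_1+A_2+A_3+A_5$ is asserted rather than derived. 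The paper's (terse) route avoids the signed collapse altogether: since $A_1+A_2=J-I_{p^2+1}\otimes J_{4n^2}-A_3$ has transparent structure, one writes $A_1A_1=A_1(A_1+A_2)-A_1A_1^{T}$, uses $A_1(I_{p^2+1}\otimes J_{4n^2})=n(2n-1)(A_1+A_2+A_3)$, and is left only with $A_1A_3$, which is computed from the block-permutation form $A_3=[|w_{ij}|R]$ and the block-level balance of the BGW. To repair your proof, either supply the collapse argument with the skew/$R$/negacirculant relations made explicit, or reverse your logical order: compute $A_1A_3$ from the structure of $A_3$ first, obtain $A_1A_1$ from the displayed identity, and only then use complementation for the remaining products as you propose.
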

\begin{proof}
 Let $W=[w_{ij}]$ be a skew $BGW(p^2+1,p^2,p^2-1)$ over a cyclic group of order $4n$ generated by a negacirculant matrix
of order $4n$ as described in \cite{yh-drad}. Let $R=R_{2n}\otimes I_{2n}$, where $R_{2n}$ denotes the back identity matrix of order $2n$ and $I_{2n}$ is the identity matrix of order $2n$. Then $A_3=[|w_{ij}|R]$. \\
The identities for $A_1A_2=A_2A_1$ follows from the fact that each of $A_1$ and $A_2$ are the inicdence matrices ofa symmetric
$(p^2+1)4n^2, p^2(2n^2-n),p^2(n^2-n))$ designs and $A_1^T=A_2$. The numbers for $A_1A_1$ and $A_2A_2$ follows from 
the fact that the symmetric matrix $A_1+A_2$ has a simple structure and we make use of it in finding the numbers for other
products involving $A_1$ and $A_2$. The relation related to $A_3$ follwos from the observation that $A_3=[|w_{ij}|R]$ and $A_1+A_2+A_3=J_{4n^2(p^2+1)}-I_{p^2+1}\otimes J_{4n^2}$. The ramining numbers are not hard to calculate. 
\end{proof}

The eigenmatrices $P,Q$ are given as follows:
\begin{align*}
P&=\begin{pmatrix}
1 & n(2n-1)^3 & n(2n-1)^3 & 2n(2n-1)^2 & 2n-1 & 2n(2n-1) \\
1 & n(2n-1)    & n(2n-1)    & -2n(2n-1)  & 2n-1 & -2n \\
1 & n(2n-1)\sqrt{-1}    & -n(2n-1)\sqrt{-1}    & 0  & -1 & 0 \\
1 & -n(2n-1)\sqrt{-1}  & n(2n-1)\sqrt{-1}    & 0  & -1 & 0 \\
1 & -n(2n-1)   & -n(2n-1)    & -2n  & 2n-1 & 2n(2n-1) \\
1 & -n(2n-1)   & -n(2n-1)    & -2n(2n-1)  & 2n-1 & -2n 
\end{pmatrix},\\
Q&=\begin{pmatrix}
1 & (2n-1)m & 2n(2n-1)m & 2n(2n-1)m & (2n-1)^2 & (2n-1)m \\
1 & \frac{m}{2n-1} & -\frac{2nm\sqrt{-1}}{2n-1} & \frac{2nm\sqrt{-1}}{2n-1} & -1  & -\frac{m}{2n-1} \\
1 & \frac{m}{2n-1} & \frac{2nm\sqrt{-1}}{2n-1} & -\frac{2nm\sqrt{-1}}{2n-1} & -1  & -\frac{m}{2n-1}\\
1 & -m & 0 & 0 & -1 & -2n+1\\
1 & m & -2nm & -2nm & (2n-1)^2 & m \\
1 & -m & 0 & 0 & (2n-1)^2 & -m
\end{pmatrix},
\end{align*}
where $m=2n^2-2n+1$. 
By the definition of $A_4,A_5$, we have $A_4+A_5=I_{p^2+1}\otimes(J_{4n^2}-I_{4n^2})$.  
The cocliques of the digraphs whose adjacency matrices are $A_1,A_2$ corresponding to the main diagonal blocks of $A_4+A_5$ attain the upper bound in Proposition~\ref{prop:hb}.
\section*{Acknowledgement}
Hadi Kharaghani is supported by an NSERC Discovery Grant.  
Sho Suda is supported by JSPS KAKENHI Grant Number 15K21075.


\appendix
\def\thesection{Appendix \Alph{section}}

\section{Parameters of the association scheme in Theorem~\ref{thm:as}(i)}
\begin{align*}
B_1&=\begin{pmatrix}
0 & 1 & 0 & 0 & 0 \\
\frac{n^2-2n}{2} & \frac{n^2-3n}{4} & \frac{n^2-3n}{4} & \frac{n^2-4n}{4} & \frac{n^2-2n}{4} \\
0 & \frac{n^2-3n}{4} & \frac{n^2-3n}{4} & \frac{n^2}{4} & \frac{n^2-2n}{4} \\
0 & \frac{n}{4}-1 & \frac{n}{4} & 0 & 0 \\
0 & \frac{n}{4} & \frac{n}{4} & 0 & 0 
\end{pmatrix}\\
B_2&=\begin{pmatrix}
0 & 0 & 1 & 0 & 0 \\
0 & \frac{n^2-3n}{4} & \frac{n^2-3n}{4} & \frac{n^2}{4} & \frac{n^2-2n}{4} \\
\frac{n^2-2n}{2} & \frac{n^2-3n}{4} & \frac{n^2-3n}{4} & \frac{n^2-4n}{4} & \frac{n^2-2n}{4} \\
0 & \frac{n}{4} & \frac{n-4}{4} & 0 & 0 \\
0 & \frac{n}{4} & \frac{n}{4} & 0 & 0 
\end{pmatrix}\\
B_3&=\begin{pmatrix}
0 & 0 & 0 & 1 & 0 \\
0 & \frac{n}{4}-1 & \frac{n}{4} & 0 & 0 \\
0 & \frac{n}{4} & \frac{n}{4}-1 & 0 & 0 \\
\frac{n}{2}-1 & 0 & 0 & \frac{n}{2}-2 & 0 \\
0 & 0 & 0 & 0 & \frac{n}{2}-1 
\end{pmatrix}\\
B_4&=\begin{pmatrix}
0 & 0 & 0 & 0 & 1 \\
0 & \frac{n}{4} & \frac{n}{4} & 0 & 0 \\
0 & \frac{n}{4} & \frac{n}{4} & 0 & 0 \\
0 & 0 & 0 & 0 & \frac{n}{2}-1 \\
\frac{n}{2} & 0 & 0 & \frac{n}{2} & 0
\end{pmatrix}\\
Q&=\begin{pmatrix}
1 & n-1 & n-2 & \frac{(n-1)(n-2)}{2} & \frac{(n-1)(n-2)}{2} \\
1 & 0 & -1 & -\frac{n-1}{2} & \frac{n-1}{2} \\
1 & 0 & -1 & \frac{n-1}{2} & -\frac{n-1}{2} \\
1 & n-1 & n-2 & -n+1 & -n+1 \\
1 & -n+1 & n-2 & 0 & 0  
\end{pmatrix}
\end{align*}

\section{Parameters of the association scheme in Theorem~\ref{thm:as}(ii)}\begin{align*}
B_1&=\begin{pmatrix}
0 & 1 & 0 & 0 & 0 \\
0 & \frac{n^2-3n}{4} & \frac{n^2-3n}{4} & \frac{n^2}{4} & \frac{n^2-2n}{4} \\
\frac{n^2-2n}{2} & \frac{n^2-3n}{4} & \frac{n^2-3n}{4} & \frac{n^2-4n}{4} & \frac{n^2-2n}{4} \\
0 & \frac{n}{4}-1 & \frac{n}{4} & 0 & 0 \\
0 & \frac{n}{4} & \frac{n}{4} & 0 & 0 
\end{pmatrix}\\ 
B_2&=\begin{pmatrix}
0 & 0 & 1 & 0 & 0 \\
\frac{n^2-2n}{2} & \frac{n^2-3n}{4} & \frac{n^2-3n}{4} & \frac{n^2-4n}{4} & \frac{n^2-2n}{4} \\
0 & \frac{n^2-3n}{4} & \frac{n^2-3n}{4} & \frac{n^2}{4} & \frac{n^2-2n}{4} \\
0 & \frac{n}{4} & \frac{n-4}{4} & 0 & 0 \\
0 & \frac{n}{4} & \frac{n}{4} & 0 & 0 
\end{pmatrix}\\
B_3&=\begin{pmatrix}
0 & 0 & 0 & 1 & 0 \\
0 & \frac{n}{4}-1 & \frac{n}{4} & 0 & 0 \\
0 & \frac{n}{4} & \frac{n}{4}-1 & 0 & 0 \\
\frac{n}{2}-1 & 0 & 0 & \frac{n}{2}-2 & 0 \\
0 & 0 & 0 & 0 & \frac{n}{2}-1 
\end{pmatrix}\\
B_4&=\begin{pmatrix}
0 & 0 & 0 & 0 & 1 \\
0 & \frac{n}{4} & \frac{n}{4} & 0 & 0 \\
0 & \frac{n}{4} & \frac{n}{4} & 0 & 0 \\
0 & 0 & 0 & 0 & \frac{n}{2}-1 \\
\frac{n}{2} & 0 & 0 & \frac{n}{2} & 0
\end{pmatrix}\\
Q&=\begin{pmatrix}
1 & n-1 & n-2 & \frac{(n-1)(n-2)}{2} & \frac{(n-1)(n-2)}{2} \\
1 & 0 & -1 & -\frac{\sqrt{-1}(n-1)}{2} & \frac{\sqrt{-1}(n-1)}{2} \\
1 & 0 & -1 & \frac{\sqrt{-1}(n-1)}{2} & -\frac{\sqrt{-1}(n-1)}{2} \\
1 & n-1 & n-2 & -n+1 & -n+1 \\
1 & -n+1 & n-2 & 0 & 0  
\end{pmatrix}
\end{align*}


\begin{thebibliography}{99}
\bibitem{B}
A. E. Brouwer, 
Distance regular graphs of diameter $3$ and strongly regular graphs, 
Discrete Math, 49 (1984), 101--103.

\bibitem{BH}
A. E. Brouwer and W. H.  Haemers, Spectra of graphs. Universitext. {\sl Springer, New York}, 2012. xiv+250 pp.

\bibitem{C}
Y. Chang, Imprimitive Symmetric Association Schemes of Rank 4, Thesis, University of Michigan, 1994.

\bibitem{GC}
R. W. Goldbach and H.L. Claasen, 
$3$-class association schemes and Hadamard matrices of a certain block form, 
{\it Europ. J. Combin.} (1998) 19, 943--951.

\bibitem{HT}
W. H. Haemers and V. D. Tonchev, 
Spreads in strongly regular graphs, 
{\it Des.\ Codes and Crypt.} (1996) 8, 145--157.


\bibitem{HKS}
W.H. Holzmann, H. Kharaghani and S.Suda
Mutually unbiased biangular vectors and association schemes,
to appear in Springer Proceedings in Mathematics and Statistics: Algebraic Design Theory and Hadamard Matrices.

\bibitem{yh-drad}
Y.J. Ionin and H. Kharaghani. Doubly regular digraphs and symmetric designs, 
{\sl J. Combin. Theory Ser. A} 101 (2003), no. 1, 35--48.


\bibitem{JJKS}
L. K. Jorgensen, G. A. Jones, M. H. Klin and S. Y. Song, 
Normally regular digraphs, association schemes and related combinatorial structures, 
{\sl S\'{e}m. Lothar. Combin}. 71 (2013/14), Art. B71c, 39 pp.

\bibitem{K}
H. Kharaghani, 
New class of weighing matrices, 
{\it Ars. Combin.} 19 (1985), 69-72.

\bibitem{KSS}
H. Kharaghani, S. Sasani and S. Suda, 
Mutually unbiased Bush-type Hadamard matrices and association schemes, 
{\sl Elec.\ J. Combin.} 22  (2015) P3. 10.

\bibitem{MX}
M. Muzychuk and Q. Xiang, 
Symmetric Bush-type Hadamard matrices of order $4m^4$ exist for all odd $m$,
{\it Proc. Amer. Math. Soc.} 134 (2006), no. 8, 2197--2204. 

\bibitem{W}
W. D. Wallis, 
On a problem of K. A. Bush concerning Hadamard matrices, 
{\sl Bull.\ Aust.\ Math.\ Soc.} {\bf 6}  (1971) 321--326.
\end{thebibliography}
\end{document}